\newtheorem{thm}{Theorem}[section]
\newtheorem{cor}[thm]{Corollary}
\newtheorem{lem}[thm]{Lemma}
\newtheorem{prop}[thm]{Proposition}
\theoremstyle{definition}
\newtheorem{defn}[thm]{Definition}
\theoremstyle{remark}
\newtheorem{rem}[thm]{Remark}
\numberwithin{equation}{section}
\newcommand{\abs}[1]{\left\vert#1\right\vert}
\newcommand{\set}[1]{\left\{#1\right\}}
\newcommand{\sph}{\mathbb{S}}
\newcommand{\df}{\mathrm{d}}
\newcommand{\B}{{B}}
\newcommand{\Ric}{\mathrm{Ric}}
\newcommand{\scal}{\mathrm{scal}}
\newcommand{\Hess}{\mathrm{Hess}}
\newcommand{\vol}{\mathrm{vol}}
\newcommand{\dive}{\mathrm{div}}
\begin{document}

\title[Bach flat warped product Einstein manifolds]{On Bach flat warped product Einstein manifolds}%
\author{Qiang Chen}
\address{Department of Mathematics, Lehigh University}
\email{qic208@lehigh.edu}

\author{Chenxu He}%
\address{Department of Mathematics, Lehigh University}%
\email{he.chenxu@lehigh.edu}%

\subjclass[2000]{53B20, 53C21, 53C25}
\begin{abstract}
In this paper we show that a compact warped product Einstein manifold with vanishing Bach tensor of dimension $n \geq 4$ is either Einstein or a finite quotient of a warped product with $(n-1)$-dimensional Einstein fiber. The fiber has constant curvature if $n=4$.
\end{abstract}

\maketitle
\section{Introduction}

A $\left( \lambda ,n+m\right) $-Einstein manifold $(M^n,g,f)$ is a complete Riemannian manifold with a smooth function $f$ which satisfies the following $(\lambda, n+m)$-Einstein equation:
\begin{equation}\label{eqnWPEf}
\Ric^m_f = \Ric + \Hess f - \frac{1}{m} \df f\otimes \df f = \lambda g.
\end{equation}
When $m$ is a positive integer, $(\lambda, n+m)$-Einstein metrics are exactly those $n$-dimensional manifolds which are the base of an $n+m$ dimensional Einstein warped product, i.e., $(M\times F^m, g + e^{-2f/m} g_F)$ is an Einstein manifold with Einstein constant $\lambda$ where $(F^m, g_F)$ is another Einstein manifold, see \cite{Besse}.

$\Ric^m_f$ in equation (\ref{eqnWPEf}) is also called the \emph{m-Bakry Emery tensor}. Lower bounds on this tensor are related to various comparison theorems for the measure $e^{-f} \df \mathrm{vol}_g$, see for example Part II of \cite{Villani}, \cite{WeiWylie} and the references therein. From these comparison theorems, the $(\lambda, n+m)$-Einstein equation is the natural Einstein condition of having constant $m$-Bakry-Emery Ricci tensor. $(\lambda, n+1)$-Einstein metrics are more commonly called \emph{static metrics} and such metrics have been extensively studied for their connections to scalar curvature, the positive mass theorem and general relativity. We do not consider these metrics here and we assume $m \ne 1$ throughout this paper. Taking $m \rightarrow \infty$, one also obtains the gradient Ricci soliton equation
\begin{eqnarray*}
\mathrm{Ric} + \mathrm{Hess }f = \lambda g.
\end{eqnarray*}
We could then also call a gradient Ricci soliton a $(\lambda, \infty)$-Einstein manifold. Ricci solitons have been studied because of their connection to Ricci flow and that they are natural generalization of Einstein manifolds. We refer to the survey paper\cite{Caosoliton} and references therein for recent progress on this subject.

In a series of papers with P. Petersen and W. Wylie, the second author studied warped product Einstein manifolds under various curvature and symmetry conditions, see \cite{HPWlcf,HPWrigidity,HPWvirtual}. Many interesting results on gradient Ricci solitons are also obtained on warped product Einstein manifolds. However we also found some nontrivial examples, i.e., not Einstein or product of them, on homogeneous spaces in \cite{HPWrigidity}. Those examples are in stark contrast to the gradient Ricci soliton case, where all homogeneous gradient Ricci solitons are Einstein or product of Einstein manifolds.

In this paper we consider an interesting class of complete warped product Einstein manifolds: those with vanishing Bach tensor. This well-known tensor was first introduced by R. Bach in \cite{Bach} to study conformal relativity in early 1920s'. On any Riemannian manifold $(M^n, g)(n\geq 4)$ the Bach tensor is defined by
\begin{equation*}
\B(X,Y) = \frac{1}{n-3}(\nabla^2_{E_i, E_j}W)(X,E_i, E_j, Y) + \frac{1}{n-2}\Ric(E_i,E_j)W(X, E_i,E_j, Y).
\end{equation*}
Here $\set{E_i}_{i=1}^n$ is an orthonormal frame, $\nabla^2_{E_i, E_j}$ is the covariant derivative of tensors and $W$ is the Weyl curvature tensor. In the case when the manifold is Einstein or locally conformal flat, the Bach tensor vanishes. The dimension 4 is most interesting since on any compact 4-manifold $(M^4, g)$, Bach flat metrics are precisely the critical points of the following conformally invariant functional on the space of metrics:
\begin{equation*}
\mathcal{W}(g) = \int_M |W_g|^2 d\vol_g,
\end{equation*}
where $W_g$ is the Weyl tensor of the metric $g$. Other than Einstein and locally conformally flat metrics, there are two more classes of compact 4-manifolds with vanishing Bach tensor: metrics are locally conformal to an Einstein one, and half conformally flat metrics (self-dual or anti-self-dual) if $M^4$ is orientable. The aim of this paper is to show that a stronger converse on warped product Einstein metrics holds. The proof is motivated by a recent corresponding result on gradient Ricci solitons in \cite{caochenBach} proved by H.-D. Cao and the first author.

\begin{thm}\label{thmWPEBachflatD4}
Suppose $(M^4,g,f)$ is a compact $(\lambda, 4+m)$-Einstein manifold with $m \ne 0, 1$ or $2-n$. We assume further that the Bach tensor $\B$ of $M$ vanishes everywhere. Then $M$ is locally conformal flat.
\end{thm}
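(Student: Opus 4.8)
The plan is to adapt the argument of Cao and the first author for Bach-flat gradient Ricci solitons, carrying the extra quadratic term $-\frac1m\df f\otimes\df f$ of \eqref{eqnWPEf} through every computation. It is convenient to pass to the warping potential $u=e^{-f/m}$, which converts \eqref{eqnWPEf} into the equivalent form $u\,\Ric=\lambda u\,g+m\,\Hess u$, so that $\Ric=\lambda g+\frac{m}{u}\Hess u$ and $\nabla f=-\frac{m}{u}\nabla u$; note $u>0$ everywhere since $f$ is smooth. Because $n=4$, the manifold is locally conformally flat if and only if the Weyl tensor $W$ vanishes, so the goal is to prove $W\equiv0$. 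I will use the contracted second Bianchi identity $\nabla^lW_{ijkl}=\frac{n-3}{n-2}\,C_{ijk}$, where $C_{ijk}=\nabla_i\Ric_{jk}-\nabla_j\Ric_{ik}-\frac{1}{2(n-1)}\bigl(g_{jk}\nabla_i\scal-g_{ik}\nabla_j\scal\bigr)$ is the Cotton tensor, together with the induced expression $(n-2)\,\B_{ij}=\nabla^kC_{ikj}+\Ric^{kl}W_{ikjl}$ for the Bach tensor, obtained by combining the definition above with the contracted Bianchi identity.

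The first step is to compute the Cotton tensor directly from $\Ric=\lambda g+\frac{m}{u}\Hess u$. Differentiating and antisymmetrizing introduces third covariant derivatives of $u$, and commuting them through the Ricci identity $\nabla_i\nabla_j\nabla_ku-\nabla_j\nabla_i\nabla_ku=-R_{ijkl}\nabla^lu$ produces the full curvature tensor. Decomposing $R_{ijkl}$ into its Weyl part plus Schouten terms and re-substituting the Einstein equation to remove the remaining Ricci terms, I expect every non-Weyl contribution to cancel, leaving a clean identity of the form $u\,C_{ijk}=m\,W_{ijkl}\nabla^lu$ up to an explicit nonzero constant. This is the precise analog of the Cao-Chen Cotton identity, and it is here that the hypotheses $m\neq0,1,2-n$ must be used: they guarantee that the scalar factors appearing during the cancellation (which involve $m$, $m-1$ and $m+n-2$) are nonzero, so the reduction to a pure Weyl term goes through.

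Next I would substitute this identity into $(n-2)\B_{ij}=\nabla^kC_{ikj}+\Ric^{kl}W_{ikjl}=0$. The divergence $\nabla^kC_{ikj}$ expands, via $u\,C=mW(\nabla u)$, into a divergence-of-Weyl term that folds back into $C$ through the contracted Bianchi identity, a term $\tfrac{1}{u}W_{ikjl}\nabla^ku\,\nabla^lu$ that vanishes by the antisymmetry of $W$, and a Hessian term $\tfrac{1}{u}W_{ikjl}\nabla^k\nabla^lu$ that can be traded for $\Ric^{kl}W_{ikjl}$ using the Einstein equation. The aim is an identity in which $\B=0$ is equivalent to a statement about $W(\,\cdot\,,\cdot\,,\cdot\,,\nabla u)$ alone. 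I would then contract this relation with the Cotton tensor and integrate over the compact manifold against the weight $u^{m}\,d\vol=e^{-f}\,d\vol$; integrating by parts, which produces no boundary terms since $M$ is closed, the Bach-flat condition should reduce to $\int_M\bigl(\text{positive weight}\bigr)\,\abs{C}^2\,d\vol=0$. Hence $C\equiv0$, and by the Cotton identity $W_{ijkl}\nabla^lu\equiv0$: the gradient $\nabla f$ lies in the kernel of the Weyl tensor everywhere.

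The final, and hardest, step is to upgrade this partial vanishing to $W\equiv0$; this is where the four-dimensional warped-product geometry is essential. On the open set where $\nabla f\neq0$ the $(\lambda,4+m)$-Einstein structure realizes $(M,g)$ locally as a warped product $I\times_\phi N^{3}$ over the regular level sets of $f$, with $\nabla f$ tangent to the $I$-factor. Since $\nabla f$ is a null direction for $W$ and $C=0$ (so the Weyl tensor is harmonic), the curvature is governed entirely by the intrinsic geometry of the $3$-dimensional fiber $N^{3}$, which carries no Weyl tensor of its own; the combination of $C=0$ and $W(\nabla f)=0$ should force $N^{3}$ to have constant curvature, exactly the conclusion recorded in the abstract. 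A one-dimensional warped product over a constant-curvature fiber is locally conformally flat, so $W=0$ on this open set, and $W\equiv0$ on all of $M$ by continuity. The main obstacle is precisely this last transition: extracting constant curvature of the fiber from the combined conditions $C=0$ and $W(\nabla f)=0$, and controlling the locus $\nabla f=0$, will require the detailed warped-product identities rather than the formal tensor manipulations of the earlier steps.
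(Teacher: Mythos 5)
Your overall strategy (use Bach flatness plus an integral identity to kill a tensor, deduce $C=0$ and $W(\cdot,\cdot,\cdot,\nabla f)=0$, then invoke the four\mbox{-}dimensional warped\mbox{-}product structure) is the right one, but there is a genuine gap at its core: the ``clean'' Cotton identity $u\,C_{ijk}=m\,W_{ijkl}\nabla^l u$ that you expect to fall out of the computation is false. When you differentiate $\Ric=\lambda g+\frac{m}{u}\Hess u$, antisymmetrize, and commute derivatives, the non\mbox{-}Weyl contributions do \emph{not} all cancel; what survives is exactly the $3$-tensor $D$ of (\ref{eqnbetatensor}), and the correct relation is (\ref{eqnCDWeyl}),
\begin{equation*}
C(X,Y,Z)=W(X,Y,Z,\nabla f)+\frac{m+n-2}{m}\,D(X,Y,Z),
\end{equation*}
with $D$ built from $\Ric$, $\scal$ and $\df f$ and in general nonzero. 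Even in the Cao--Chen soliton model you are adapting, the analogous identity carries this $D$-term; it is the whole point of their (and this paper's) argument. Because of this, your integration\mbox{-}by\mbox{-}parts step cannot produce $\int(\text{positive weight})\abs{C}^2=0$. What Bach flatness actually yields, after setting $X=Y=\nabla f$ in the expansion of $(n-2)\B$ and integrating over the closed manifold (against plain $d\vol$; no weight $e^{-f}$ is needed), is the identity (\ref{eqnintegralBachff}), which forces only $D\equiv 0$. At that stage $C=W(\cdot,\cdot,\cdot,\nabla f)$ and both could a priori still be nonzero; showing they vanish requires the geometric consequences of $D=0$ on regular level sets of $f$ (Proposition \ref{propWPEvanishingD}: constancy of $\scal$ and $\abs{\nabla f}$, umbilicity, and $R(\nabla f,X,Y,Z)=0$ for tangential $X,Y,Z$) to get the purely tangential components $C_{abc}=0$, plus a \emph{second} application of Bach flatness to kill $C_{1ij}$, where the relevant coefficient $\frac{n-3}{n-2}+\frac{1}{m}$ is nonzero precisely because $m\ne 2-n$ when $n=4$. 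Your proposal has no substitute for either of these steps.

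A smaller remark on your last paragraph: once $C=0$ and $W(\cdot,\cdot,\cdot,\nabla f)=0$ are established, the step you single out as hardest is actually the one the paper outsources. Theorem 7.9 of \cite{HPWlcf} already classifies such metrics as warped products $\df t^2+\psi^2(t)g_L$ with $(L^3,g_L)$ Einstein, and a $3$-dimensional Einstein fiber automatically has constant curvature, whence $W=0$ by a direct computation; the analyticity of $g$ and $f$ handles the locus $\nabla f=0$. So the real work lies in the $D$-tensor machinery you omitted, not in the final warped\mbox{-}product reduction.
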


Theorem \ref{thmWPEBachflatD4} is a direct consequence of the following more general result combined with some early results in \cite{HPWlcf}.
\begin{thm}\label{thmWPEBachflatallD}
Suppose $(M^n,g,f)(n \geq 4)$ is a compact $(\lambda, n+m)$-Einstein manifold with $m \ne 0, 1$ or $2-n$. We assume further that the Bach tensor $\B$ of $M$ vanishes everywhere. Then $M$ has harmonic Weyl tensor and $W(X,Y,Z, \nabla f) = 0$ for any vector fields $X$, $Y$ and $Z$.
\end{thm}

\begin{rem}
Note that our Theorem \ref{thmWPEBachflatallD} is analogous to Theorem 5.1 in \cite{caochenBach}.
\end{rem}
\begin{rem}
In the case when $m=2-n$, our argument breaks down since in one key identity, the equation (\ref{eqnCDWeyl}), some coefficient vanishes. On the other hand, it is observed in \cite{CMMR} that in this case a $(\lambda, n+(2-n))$-Einstein metric is globally conformal Einstein. In particular it has vanishing Bach tensor when $n=4$.
\end{rem}
\begin{rem}
In \cite{Bohm}, C. B\"{o}hm constructed compact rotationally symmetric $(\lambda, n+m)$-Einstein metrics on $\sph^n$ for $n=3, 4, 5, 6, 7$ which are not Einstein. This is in sharp contrast to the gradient Ricci solitons. These examples also show that our conclusion in dimension 4 cannot be strengthened.
\end{rem}
\begin{rem}
Theorem \ref{thmWPEBachflatD4} was first obtained by G. Catino in \cite{Catinohalfweyl} under a stronger assumption that $(M^4, g)$ is half conformally flat.
\end{rem}

If $m$ is positive then from the comparison theorems of $m$-Bakry Emery tensor in \cite{Qian}, a $(\lambda, n+m)$-Einstein manifold is compact if and only if $\lambda > 0$. Using Theorem 1.5 in \cite{HPWlcf}, the global classification of warped product Einstein manifolds with harmonic Weyl tensor and $W(\nabla f, \cdot, \cdot, \nabla f) = 0$, Theorem \ref{thmWPEBachflatallD} has the following
\begin{cor}
Let $m \ne 1$ be a positive number. Suppose that $(M^n,g, f)(n\geq 4)$ is a simply-connected $(\lambda, n+m)$-Einstein manifold with $\lambda > 0$ and has vanishing Bach tensor. Then $(M^n, g, f)$ is either
\begin{enumerate}
\item Einstein with constant function $f$, or
\item $g = \df t^2 + \psi^2(t)g_L$, $f = f(t)$, where $g_L$ is Einstein with non-negative Ricci curvature, and has constant curvature if $n=4$.
\end{enumerate}
\end{cor}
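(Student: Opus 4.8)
The plan is to obtain the Corollary by feeding the conclusion of Theorem \ref{thmWPEBachflatallD} into the classification of warped product Einstein manifolds with harmonic Weyl tensor, once compactness has been secured. First I would verify that the parameter hypotheses of Theorem \ref{thmWPEBachflatallD} hold automatically here. Since $m$ is positive we have $m \neq 0$; the assumption $m \neq 1$ is built into the statement; and because $n \geq 4$ we have $2 - n \leq -2 < 0 < m$, so $m \neq 2-n$ as well. Moreover, with $m > 0$ and $\lambda > 0$ the comparison theorem for the $m$-Bakry--Emery tensor in \cite{Qian} forces $(M^n, g, f)$ to be compact. Hence the full strength of Theorem \ref{thmWPEBachflatallD} is available.

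Next I would apply Theorem \ref{thmWPEBachflatallD} to conclude that $M$ has harmonic Weyl tensor and that $W(X,Y,Z,\nabla f) = 0$ for all vector fields $X, Y, Z$. The latter is stronger than what the classification needs: specializing to $X = \nabla f$ and using the symmetries of the Weyl tensor immediately yields $W(\nabla f, \cdot, \cdot, \nabla f) = 0$. At this point $(M^n, g, f)$ is a compact, simply-connected warped product Einstein manifold satisfying exactly the two hypotheses of Theorem~1.5 in \cite{HPWlcf}, harmonic Weyl tensor and $W(\nabla f, \cdot, \cdot, \nabla f) = 0$; simple connectivity is what upgrades its conclusion from a finite quotient to a genuine global warped product. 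Invoking that classification then yields the dichotomy of the Corollary directly: either $M$ is Einstein with $f$ constant, which is alternative (1), or $g = \df t^2 + \psi^2(t) g_L$ with $f = f(t)$ and $(L, g_L)$ an $(n-1)$-dimensional Einstein fiber, which is alternative (2).

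It remains to pin down the two refinements asserted in case (2). The non-negativity of the Ricci curvature of $g_L$ I would read off from the closing-up conditions of the compact rotationally symmetric warped product: for $\df t^2 + \psi^2(t) g_L$ to be smooth and compact, the warping function $\psi$ must degenerate appropriately at the endpoints of the $t$-interval, and the resulting boundary behavior of $\psi$, combined with $\lambda > 0$, constrains the Einstein constant of the fiber so as to exclude a fiber of negative Ricci curvature. The constant-curvature statement when $n = 4$ is the most transparent step: the fiber $(L, g_L)$ then has dimension $n - 1 = 3$, and every $3$-dimensional Einstein manifold has constant sectional curvature, since its Weyl tensor vanishes identically and the full curvature is determined by the Ricci tensor.

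I would expect the main obstacle to be one of bookkeeping rather than of new geometry, since the genuinely substantial inputs---the passage from Bach flatness to harmonic Weyl tensor together with $W(X,Y,Z,\nabla f) = 0$, and the structural classification under those hypotheses---are already isolated in Theorem \ref{thmWPEBachflatallD} and in Theorem~1.5 of \cite{HPWlcf}. The only points requiring genuine care are confirming that the Einstein alternative corresponds precisely to $f$ being constant, so that the two cases are stated correctly, and extracting the sign of the fiber's Ricci curvature consistently from the compact warped product picture.
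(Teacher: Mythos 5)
Your proposal is correct and follows essentially the same route as the paper: compactness of $M$ from the $m$-Bakry--Emery comparison theorem of \cite{Qian} (using $m>0$, $\lambda>0$), then Theorem \ref{thmWPEBachflatallD} to obtain harmonic Weyl tensor and $W(\cdot,\cdot,\cdot,\nabla f)=0$, and finally the global classification in Theorem 1.5 of \cite{HPWlcf}. The one point you omit, which the paper addresses in a remark, is that Theorem 1.5 of \cite{HPWlcf} is proved there under the assumption $m>1$, so for $0<m<1$ one must note that its argument carries over unchanged.
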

\begin{rem}
In the proof of Theorem 1.5 in \cite{HPWlcf} the authors made the assumption that $m >1$. In fact the whole argument carries over the case when $0 < m < 1$.
\end{rem}

\smallskip

The paper is organized as follows. In section 2 we recall definitions and basic properties of Bach, Cotton, Weyl tensors and, the $D$-tensor defined in \cite{caochenlcf}. Note that in \cite{caochenlcf}, the $D$-tensor is denoted by $B$. Here $B$ is referred to the Bach tensor. We also list some relevant properties of warped product Einstein metrics. In section 3, we show how the $D$-tensor characterizes the geometry of the level set of $f$, see Proposition \ref{propWPEvanishingD}. In section 4, we prove Theorem \ref{thmWPEBachflatD4} and Theorem \ref{thmWPEBachflatallD}.

\medskip{}

\textbf{Acknowledgment:} The authors would like to thank Huai-Dong Cao for enlightening conversations and helpful suggestions.

\medskip{}

\section{Preliminaries}
In this section we set up our notations and recall some well-known facts on warped product Einstein manifolds. For more detail, see for example \cite{HPWlcf} and references therein.

We use the convention that the Riemann curvature tensor $R(X,Y,Y,X)$ has the same sign as the sectional curvature of the 2-plane spanned by $X$ and $Y$. For $n\geq 4$ the Weyl curvature tensor is defined as
\begin{equation*}
R = W + \frac{2}{n-2}\Ric \odot g - \frac{\scal}{(n-1)(n-2)}g\odot g,
\end{equation*}
where, for two symmetric (0,2)-tensors $s$ and $r$, we define the
Kulkarni-Nomizu product $s\odot r$ to be the $(0,4)$-tensor
\begin{eqnarray*}
(s\odot r)(X,Y,Z,W) & = & \frac{1}{2}\left( r(X,W)s(Y,Z)+r(Y,Z)s(X,W) \right.\\
& & \left. -r(X,Z)s(Y,W)-r(Y,W)s(X,Z)\right).
\end{eqnarray*}
Recall that for any $X, Y \in TM$ the Bach tensor $\B$ is the symmetric (0,2)-tensor defined by
\begin{equation}\label{eqnBachtensor}
\B(X,Y) = \frac{1}{n-3}\sum_{i,j} (\nabla^2_{E_i, E_j}W)(X,E_i,E_j,Y) + \frac{1}{n-2}\sum_{i,j}\Ric(E_i,E_j)W(X,E_i,E_j,Y),
\end{equation}
where $\set{E_i}_{i=1}^n$ is an orthonormal frame and $\nabla^2_{E_i,E_j} W$ is the covariant derivative of the Weyl tensor.

The Schouten tensor is the $(0,2)$-tensor
\begin{equation*}
S = \Ric - \frac{\scal}{2(n-1)}g
\end{equation*}
and the Cotton tensor $C$ is defined as
\begin{equation*}
C(X,Y,Z) =  \left(\nabla_X S\right)(Y,Z) - \left(\nabla_Y S\right)(X,Z), \quad \mbox{for any } X,Y, Z \in TM.
\end{equation*}
Using the fact that $(\dive R)(X,Y,Z) = (\nabla_X \Ric)(Y,Z) - (\nabla_Y \Ric)(X,Z)$ we have
\begin{equation}\label{eqnCottontensor}
C(X,Y,Z) = (\dive R)(X,Y,Z) - \frac{1}{2(n-1)}\left((\nabla_X \scal)g(Y,Z) - (\nabla_Y \scal)g(X,Z)\right).
\end{equation}

\begin{defn}
A Riemannian manifold $(M^n,g)$ has \emph{harmonic Weyl tensor} if the Cotton tensor vanishes.
\end{defn}
\begin{rem}
For $n\geq 4$ the Cotton tensor is, up to a constant factor, the divergence of the Weyl tensor:
\begin{equation}\label{eqnCottondivWeyl}
C(X,Y,Z) = \frac{n-2}{n-3}(\dive W)(X,Y,Z), \quad \mbox{ for any } X, Y, Z \in TM.
\end{equation}
So we can rewrite the Bach tensor as
\begin{equation}\label{eqnBachCWeyl}
\B(X,Y) = \frac{1}{n-2}\left(\sum_i (\nabla_{E_i} C)(E_i, X, Y) + \sum_{i,j}\Ric(E_i, E_j)W(X, E_i, E_j, Y)\right)
\end{equation}
where $\set{E_i}_{i=1}^n$ is an orthonormal frame.
\end{rem}
\begin{rem}
Note that if $n =3$, $W = 0$ and then harmonic Weyl tensor is equivalent to $(M^3, g)$ being locally conformal flat. If $n \geq 4$ then $M$ has harmonic Weyl tensor if and only if $\dive W = 0$, and $M$ is locally conformal flat if and only if $W = 0$.
\end{rem}

On a $(\lambda, n+m)$-Einstein manifold $(M,g, f)$ for any $X, Y, Z \in TM$ we define the $D$-tensor, which is identical to the one in \cite{caochenlcf} (and \cite{caochenBach}) for Ricci solitons, as follows:
\begin{eqnarray}
D(X,Y,Z) & = &  \frac{1}{(n-1)(n-2)}\left(\Ric(X, \nabla f) g(Y,Z) - \Ric(Y, \nabla f) g(X,Z)\right) \notag \\
& & + \frac{1}{n-2}\left(\Ric(Y,Z)g(X, \nabla f) - \Ric(X,Z)g(Y, \nabla f)\right) \label{eqnbetatensor} \\
& & - \frac{\scal}{(n-1)(n-2)}\left(g(X, \nabla f)g(Y,Z) - g(Y, \nabla f) g(X,Z)\right). \notag
\end{eqnarray}

Note that both $C$ and $D$ tensors are skew-symmetric in their first two indices and trace-free in any two indices:
\begin{equation*}
C(X, Y, Z) = - C(Y,X,Z), \quad \sum_{i}C(E_i, E_i, X) = \sum_{i}C(E_i, X, E_i) = 0;
\end{equation*}
and
\begin{equation*}
D(X, Y, Z) = - D(Y,X,Z), \quad \sum_{i}D(E_i, E_i, X) = \sum_{i}D(E_i, X, E_i) = 0.
\end{equation*}

Next we recall some properties on warped product Einstein manifolds and the proofs can be found in \cite{HPWlcf}. The function $\rho$ is defined by
\begin{equation*}
\scal = (n-1)\lambda - (m-1)\rho.
\end{equation*}
Note that a $(\lambda, n+1)$-Einstein manifold has constant scalar curvature $(n-1)\lambda$. The modified Ricci and Riemann curvature tensors are defined by
\begin{equation*}
P = \Ric + \rho g,
\end{equation*}
and
\begin{eqnarray*}
Q & = & R + \frac{2}{m}\Ric \odot g - \frac{\lambda + \rho}{m}g\odot g \\
& = & R + \frac{2}{m}P \odot g + \frac{\rho - \lambda}{m}g\odot g.
\end{eqnarray*}

\begin{prop}\label{propWPEPQ}
Suppose $(M, g, f)$ is a $(\lambda, n+m)$-Einstein manifold with $m \ne 1$. Then we have
\begin{eqnarray}
& & P(\nabla f) = - \frac{m}{2}\nabla \rho, \mbox{ or equivalently } \Ric(\nabla f) = - \frac{m}{2}\nabla \rho + \rho \nabla f; \label{eqnWPEPf} \\
& & (\dive R) = Q(X,Y,Z, \nabla f) - \frac{1}{m}(g\odot g)(X,Y,Z,P(\nabla f)). \label{eqnWPEdivR}
\end{eqnarray}
\end{prop}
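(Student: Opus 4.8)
The plan is to obtain both identities by differentiating the $(\lambda,n+m)$-Einstein equation (\ref{eqnWPEf}) and applying the standard Bianchi and Ricci (commutation) identities; no part of the argument is conceptual, so the work is entirely in the tensor bookkeeping.

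For (\ref{eqnWPEPf}) I would rewrite (\ref{eqnWPEf}) as $\Ric = \lambda g - \Hess f + \tfrac1m\,\df f\otimes\df f$ and take its divergence. Three ingredients enter: the contracted second Bianchi identity $\dive\Ric = \tfrac12\nabla\scal$; the Bochner commutation $\dive(\Hess f) = \nabla(\Delta f) + \Ric(\nabla f,\cdot)$; and the direct computation $\dive(\df f\otimes\df f) = (\Delta f)\,\df f + \Hess f(\nabla f,\cdot)$. I would then use the trace of (\ref{eqnWPEf}), namely $\scal + \Delta f - \tfrac1m|\nabla f|^2 = n\lambda$, to eliminate $\Delta f$ and $\nabla|\nabla f|^2 = 2\,\Hess f(\nabla f)$, and read off $\Hess f(\nabla f)$ from (\ref{eqnWPEf}) evaluated on $\nabla f$. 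After substitution the cubic terms in $\nabla f$ cancel and one is left with $\tfrac{m-1}m\,\Ric(\nabla f) = \tfrac12\nabla\scal - \tfrac1m(\scal-(n-1)\lambda)\nabla f$. Rewriting through $\scal = (n-1)\lambda-(m-1)\rho$, so that $\nabla\scal = -(m-1)\nabla\rho$ and $\scal - (n-1)\lambda = -(m-1)\rho$, yields $\Ric(\nabla f) = -\tfrac m2\nabla\rho + \rho\nabla f$, which is the second form; the first form follows from the definition of $P$.

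For (\ref{eqnWPEdivR}) I would start from the identity $(\dive R)(X,Y,Z) = (\nabla_X\Ric)(Y,Z) - (\nabla_Y\Ric)(X,Z)$ recorded above. Differentiating $\Ric = \lambda g - \Hess f + \tfrac1m\,\df f\otimes\df f$ gives $(\nabla_X\Ric)(Y,Z) = -\nabla^3 f(X,Y,Z) + \tfrac1m[\Hess f(X,Y)\,Zf + Yf\,\Hess f(X,Z)]$. Antisymmetrizing in $X,Y$, the symmetric terms $\Hess f(X,Y)\,Zf$ drop out, and the difference $\nabla^3 f(X,Y,Z) - \nabla^3 f(Y,X,Z)$ is controlled by the Ricci identity for the one-form $\df f$, producing the curvature term $R(X,Y,Z,\nabla f)$. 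Substituting $\Hess f = \lambda g - \Ric + \tfrac1m\,\df f\otimes\df f$ into the remaining terms makes the cubic $\nabla f$ terms cancel again and leaves $(\dive R)(X,Y,Z) = R(X,Y,Z,\nabla f) + \tfrac1m[Xf\,\Ric(Y,Z) - Yf\,\Ric(X,Z)] - \tfrac\lambda m[Xf\,g(Y,Z) - Yf\,g(X,Z)]$. The last step is to recognize this as $Q(X,Y,Z,\nabla f) - \tfrac1m(g\odot g)(X,Y,Z,P(\nabla f))$ by expanding the Kulkarni--Nomizu products in the definition of $Q$ together with the definitions of $P$ and $\rho$; the $\Ric(\cdot,\nabla f)$ contributions from $\tfrac2m\,\Ric\odot g$ cancel against those coming from $P(\nabla f)$, and the $\lambda$- and $\rho$-coefficients reassemble correctly.

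I expect the only real obstacle to be sign and coefficient bookkeeping: fixing the sign of the curvature term in the commutation $\nabla^3 f(X,Y,Z) - \nabla^3 f(Y,X,Z)$ under the stated convention that $R(X,Y,Y,X)$ agrees with sectional curvature, and then matching the several Kulkarni--Nomizu terms so that the $\Ric(\cdot,\nabla f)$ and cubic $\nabla f$ pieces cancel exactly. Everything else is a disciplined application of the Bianchi and Ricci identities to (\ref{eqnWPEf}).
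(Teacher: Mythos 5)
Your derivation is correct, but there is nothing in the paper to compare it against: the paper offers no proof of Proposition \ref{propWPEPQ}, simply citing \cite[(3.12)]{CSW} for (\ref{eqnWPEPf}) and \cite[Proposition 6.3]{HPWlcf} for (\ref{eqnWPEdivR}). Your self-contained argument --- divergence of $\Ric = \lambda g - \Hess f + \tfrac1m \df f\otimes\df f$ plus the contracted Bianchi and Bochner identities for the first identity, and antisymmetrized covariant differentiation plus the Ricci commutation identity for the second --- checks out: in both computations the cubic terms in $\nabla f$ do cancel, and the factor $\tfrac{m-1}{m}$ you divide by is nonzero by hypothesis. One point deserves explicit mention rather than the passing remark that the coefficients ``reassemble correctly'': the final matching of
\begin{equation*}
R(X,Y,Z,\nabla f) + \tfrac1m\left(Xf\,\Ric(Y,Z) - Yf\,\Ric(X,Z)\right) - \tfrac{\lambda}{m}\left(Xf\,g(Y,Z) - Yf\,g(X,Z)\right)
\end{equation*}
with $Q(X,Y,Z,\nabla f) - \tfrac1m(g\odot g)(X,Y,Z,P(\nabla f))$ only closes up if one takes $P = \Ric - \rho g$; with the paper's displayed definition $P = \Ric + \rho g$ the leftover coefficient of $Xf\,g(Y,Z) - Yf\,g(X,Z)$ would be $-\tfrac{\lambda+2\rho}{m}$ rather than $-\tfrac{\lambda}{m}$, and (\ref{eqnWPEPf}) would likewise be internally inconsistent. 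The sign $P = \Ric - \rho g$ is the one actually used in the proof of Lemma \ref{lemCDWeyl} and is the one consistent with the second displayed expression for $Q$, so this is a typo in the paper's definition of $P$ rather than a gap in your argument --- but since your verification of (\ref{eqnWPEdivR}) silently depends on resolving it the right way, you should state which convention you are using.
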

The first equation (\ref{eqnWPEPf}) was proved in \cite[(3.12)]{CSW} and the second one (\ref{eqnWPEdivR}) was shown in \cite[Porposition 6.3]{HPWlcf}.

\medskip{}
\section{The covariant $3$-tensor $D$}

In this section we extend some known results of the $3$-tensor $D$ from gradient Ricci solitons to warped product Einstein manifolds. Since the $(\lambda, n+m)$-Einstein equation (\ref{eqnWPEf}) contains extra term $-\frac{1}{m} \df f\otimes \df f$ we provide the calculations in detail though we essentially follow proofs in \cite{caochenlcf} and \cite{caochenBach}.

On gradient Ricci solitons, the $D$ tensor relates the Cotton tensor and Weyl tensor in the following way, see \cite[Lemma 3.1]{caochenBach}:
\begin{equation*}
C(X,Y,Z) = D(X,Y,Z) + W(X,Y,Z, \nabla f), \quad \mbox{for any } X, Y, Z \in TM.
\end{equation*}
On warped product manifolds we have the similar relation for these three tensors.
\begin{lem}\label{lemCDWeyl}
Suppose $(M^n,g,f)$ is a $(\lambda, n+m)$-Einstein manifold, then the Cotton tenor $C$, $D$-tensor and Weyl tensor $W$ satisfy the following identity:
\begin{equation}\label{eqnCDWeyl}
C(X,Y,Z) = W(X,Y,Z, \nabla f) + \frac{m+n-2}{m}D(X,Y,Z), \quad \mbox{for any }X, Y, Z \in TM.
\end{equation}
\end{lem}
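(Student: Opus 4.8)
The plan is to compute $C$ directly by feeding the warped product divergence formula into the intrinsic expression for the Cotton tensor, and then sorting the output into the three curvature blocks that make up $D$. To organize the bookkeeping I would set
\[
A(X,Y,Z) = \Ric(X,\nabla f)g(Y,Z) - \Ric(Y,\nabla f)g(X,Z),\quad B(X,Y,Z) = \Ric(Y,Z)g(X,\nabla f) - \Ric(X,Z)g(Y,\nabla f),
\]
and $G(X,Y,Z) = (g\odot g)(X,Y,Z,\nabla f) = g(X,\nabla f)g(Y,Z) - g(X,Z)g(Y,\nabla f)$, so that by the definition (\ref{eqnbetatensor}) the target right hand side of (\ref{eqnCDWeyl}) is $W(X,Y,Z,\nabla f) + \frac{m+n-2}{m}\big(\frac{1}{(n-1)(n-2)}A + \frac{1}{n-2}B - \frac{\scal}{(n-1)(n-2)}G\big)$. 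The whole proof is then a matter of showing that $C$ expands into exactly this combination.

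First I would start from (\ref{eqnCottontensor}), which expresses $C$ through $\dive R$ and $\nabla\scal$, and substitute the warped product identity (\ref{eqnWPEdivR}) for $\dive R$. Next I would unfold $Q(X,Y,Z,\nabla f)$ using the definition of $Q$ in Proposition \ref{propWPEPQ} together with the Weyl decomposition $R = W + \frac{2}{n-2}\Ric\odot g - \frac{\scal}{(n-1)(n-2)}g\odot g$. The Riemann part then contributes the single term $W(X,Y,Z,\nabla f)$ --- which is precisely the Weyl term on the right of (\ref{eqnCDWeyl}) --- while every remaining piece is a Kulkarni--Nomizu product $\Ric\odot g$ or $g\odot g$ with $\nabla f$ (or $P(\nabla f)$) in the last slot. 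Evaluating these with the definition of $\odot$ gives $(\Ric\odot g)(X,Y,Z,\nabla f) = \frac{1}{2}(A+B)$ and $(g\odot g)(X,Y,Z,P(\nabla f)) = A + \rho\,G$, using $P(X,\nabla f) = \Ric(X,\nabla f) + \rho\,g(X,\nabla f)$.

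The decisive algebraic step is to remove $\nabla\scal$, $\rho$ and $\lambda$, since $D$ is built only from $\Ric$, $\scal$, $g$ and $\nabla f$. For this I would use $\scal = (n-1)\lambda - (m-1)\rho$ together with (\ref{eqnWPEPf}) to write $\nabla\rho = -\frac{2}{m}\Ric(\nabla f) + \frac{2\rho}{m}\nabla f$, and hence $\nabla_X\scal = \frac{2(m-1)}{m}\Ric(X,\nabla f) - \frac{2(m-1)\rho}{m}g(X,\nabla f)$; this converts the scalar-curvature term of (\ref{eqnCottontensor}) into a combination of $A$ and $\rho\,G$. Collecting the $A$-coefficients coming from the $\Ric\odot g$ part of $Q$, from the $P(\nabla f)$ term, and from the rewritten scalar gradient, the three pieces $\frac{m+n-2}{m(n-2)} - \frac{1}{m} - \frac{m-1}{m(n-1)}$ sum to $\frac{m+n-2}{m(n-1)(n-2)}$, matching $D$; the $B$-coefficient $\frac{m+n-2}{m(n-2)}$ comes entirely from $\Ric\odot g$ and matches immediately.

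I expect the main obstacle to be the $G$-block: here the explicit $g\odot g$ term in $Q$, the $\rho\,G$ coming from $P(\nabla f)$, and the $\rho\,G$ coming from the rewritten $\nabla\scal$ all carry $\rho$ and $\lambda$, and one must verify that --- after re-expressing $\lambda$ through $\scal = (n-1)\lambda-(m-1)\rho$ --- every $\rho$ and $\lambda$ cancels, leaving precisely $-\frac{m+n-2}{m(n-1)(n-2)}\scal\,G$. This cancellation is exactly where the warped product relations of Proposition \ref{propWPEPQ} are indispensable, and it is the most error-prone bookkeeping in the argument; the Weyl term and the $B$-block, by contrast, fall out cleanly.
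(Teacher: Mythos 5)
Your strategy is exactly the paper's: substitute (\ref{eqnWPEdivR}) into (\ref{eqnCottontensor}), expand $Q$ through the Weyl decomposition of $R$, use (\ref{eqnWPEPf}) and $\scal = (n-1)\lambda - (m-1)\rho$ to eliminate the derivative terms, and match coefficients against the three blocks of $D$. Your $A$- and $B$-bookkeeping is correct and agrees with the paper's computation (the paper merely runs the elimination in the opposite direction, trading $\Ric(\nabla f)$ for $\nabla\rho$ and converting back at the very end).

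There is, however, one concrete sign problem that makes the cancellation you yourself single out as decisive fail as written. You evaluate $(g\odot g)(X,Y,Z,P(\nabla f)) = A + \rho\,G$ using $P(X,\nabla f) = \Ric(X,\nabla f) + \rho\,g(X,\nabla f)$, i.e.\ the display $P = \Ric + \rho g$ from Section 2. That display is a typo: the ``equivalently'' in (\ref{eqnWPEPf}) and the second expression for $Q$ both force $P = \Ric - \rho g$, and the paper's own proof explicitly uses this. Indeed, your formula $\nabla_X\scal = \frac{2(m-1)}{m}\Ric(X,\nabla f) - \frac{2(m-1)\rho}{m}g(X,\nabla f)$ already presupposes $\Ric(\nabla f) = -\frac{m}{2}\nabla\rho + \rho\nabla f$, which is incompatible with the pair $P = \Ric + \rho g$ and $P(\nabla f) = -\frac{m}{2}\nabla\rho$; so your writeup is internally inconsistent on this point. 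Quantitatively, the three contributions to the $G$-coefficient are $-\frac{\scal}{(n-1)(n-2)} - \frac{\lambda+\rho}{m}$ from $Q$, $\pm\frac{\rho}{m}$ from the $P(\nabla f)$ term, and $\frac{(m-1)\rho}{m(n-1)}$ from the rewritten $\nabla\scal$; with the $+$ sign (i.e.\ $P = \Ric - \rho g$) the total collapses to $-\frac{(m+n-2)\scal}{m(n-1)(n-2)}$ as required, while with your $-$ sign it comes out to $-\frac{(m+n-2)\scal}{m(n-1)(n-2)} - \frac{2\rho}{m}$, which is not the $D$-coefficient. Once the convention is corrected, your outline closes and coincides with the paper's proof.
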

\begin{rem}
The above identity has been observed by G. Catino in \cite{Catinohalfweyl} when $n=4$.
\end{rem}

\begin{proof}
From the formula (\ref{eqnWPEdivR}) of $\dive R$, the definition of $Q$-tensor and the decomposition curvature tensor $R$, we have
\begin{eqnarray*}
& & (\dive R)(X,Y,Z) \\
& = & Q(X,Y,Z, \nabla f) - \frac{1}{m}(g\odot g)(X,Y,Z,P(\nabla f)) \\
& = & R(X,Y,Z, \nabla f) + \frac{2}{m} (\Ric \odot g)(X,Y,Z, \nabla f) - \frac{\lambda + \rho}{m}(g\odot g)(X,Y,Z, \nabla f) \\
& & - \frac{1}{m}(g\odot g)(X, Y, Z, P(\nabla f)) \\
& = & W(X,Y, Z, \nabla f) \\
& & + \frac{2}{n-2}(\Ric \odot g)(X,Y,Z, \nabla f) - \frac{(n-1)\lambda - (m-1)\rho}{(n-1)(n-2)}(g\odot g)(X,Y, Z, \nabla f) \\
& & + \frac{2}{m} (\Ric \odot g)(X,Y,Z, \nabla f) - \frac{\lambda + \rho}{m}(g\odot g)(X,Y,Z, \nabla f) - \frac{1}{m}(g\odot g)(X, Y, Z, P(\nabla f)) \\
& = & W(X,Y,Z, \nabla f) \\
& &  - \frac{1}{m}(g\odot g)(X,Y,Z,P(\nabla f)) + \frac{2(m+n-2)}{m(n-2)}(\Ric\odot g)(X,Y, Z, \nabla f) \\
& &- \frac{(n-1)(m+n-2)\lambda + ((n-1)(n-2) -m(m-1))\rho}{m(n-1)(n-2)}(g\odot g)(X,Y,Z, \nabla f).
\end{eqnarray*}
Using the fact that $P = \Ric - \rho g$ we have
\begin{eqnarray*}
& & (\dive R)(X,Y,Z) \\
& = & W(X,Y,Z, \nabla f) \\
& & + \frac{1}{n-2}\left(\Ric(X, \nabla f)g(Y,Z) - \Ric(Y, \nabla f)g(X,Z)\right) \\
& & + \frac{m+n-2}{m(n-2)}\left(\Ric(Y,Z)g(X, \nabla f) - \Ric(X,Z)g(Y, \nabla f)\right) \\
& & - \frac{(n-1)(m+n-2)\lambda - m(m-1)\rho}{m(n-1)(n-2)}\left(g(X, \nabla f)g(Y,Z) - g(Y, \nabla f) g(X,Z)\right).
\end{eqnarray*}
From the formula (\ref{eqnWPEPf}) of $\Ric(\nabla f)$ we have
\begin{eqnarray*}
& & (\dive R)(X,Y,Z, \nabla f) - W(X,Y,Z, \nabla f) \\
& = & - \frac{m}{2(n-2)}\left((\nabla_X \rho)g(Y,Z) - (\nabla_Y \rho)g(X,Z)\right) \\
& & + \frac{m+n-2}{m(n-2)}\left(\Ric(Y,Z)g(X, \nabla f) - \Ric(X,Z)g(Y, \nabla f)\right) \\
& & - \frac{m+n-2}{m(n-1)(n-2)}\left((n-1)\lambda - m \rho\right)\left(g(X, \nabla f)g(Y,Z) - g(Y, \nabla f) g(X,Z)\right)
\end{eqnarray*}
From the defining equation (\ref{eqnCottontensor}) of the Cotton tensor $C$ and $\scal = (n-1)\lambda - (m-1)\rho$ we have
\begin{equation*}
C(X,Y,Z) = (\dive R)(X,Y,Z) + \frac{m-1}{2(n-1)}\left((\nabla_X \rho) g(Y,Z) - (\nabla_Y \rho) g(X,Z)\right)
\end{equation*}
and then
\begin{eqnarray*}
& & \frac{m}{m+n-2}(C(X,Y,Z) - W(X,Y,Z, \nabla f)) \\
& = &  - \frac{m}{2(n-1)(n-2)}\left((\nabla_X \rho)g(Y,Z) - (\nabla_Y \rho)g(X,Z)\right) \\
& & + \frac{1}{n-2}\left(\Ric(Y,Z)g(X, \nabla f) - \Ric(X,Z)g(Y, \nabla f)\right) \\
& & - \frac{(n-1)\lambda - m \rho}{(n-1)(n-2)}\left(g(X, \nabla f)g(Y,Z) - g(Y, \nabla f) g(X,Z)\right)
\end{eqnarray*}
which is exactly equal to $D(X,Y,Z)$ by the formula of $\Ric(\nabla f)$.
\end{proof}

On gradient Ricci solitons, one amazing fact of $D$ tensor is that its norm is linked to the geometry of the level set of the potential function $f$, see \cite[(4.5)]{caochenlcf} and [Lemma 3.2]\cite{caochenBach}. We have the following extension to warped product Einstein manifolds.

\begin{lem}\label{lemBnormlevelset}
Suppose $(M^n,g, f)$ be a $(\lambda, n+m)$-Einstein manifold. Let $\Sigma^{n-1}$ be a level set of $f$ with $\nabla f(p) \ne 0$ and let $h_{ab}$($a,b =2, \ldots, n$) and $H = (n-1)\sigma$ be its second fundamental form and mean curvature respectively. Then we have
\begin{equation}\label{eqnBnormlevelset}
\abs{D}^2 = \frac{2|\nabla f|^4}{(n-2)^2}\sum_{a,b=2}^n \abs{h_{ab} - \sigma g_{ab}}^2 + \frac{m^2}{2(n-1)(n-2)(m-1)^2}\abs{\nabla^{\Sigma}\scal}^2,
\end{equation}
where $\abs{\nabla^{\Sigma}\scal}^2 = |\nabla \scal|^2 - \left(\nabla \scal\cdot \frac{\nabla f}{\abs{\nabla f}}\right)^2$.
\end{lem}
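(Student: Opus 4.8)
The plan is to fix a point $p \in \Sigma$ where $\nabla f(p) \ne 0$ and work in an adapted orthonormal frame $\set{E_1, \ldots, E_n}$ with $E_1 = \nabla f / \abs{\nabla f}$ normal to $\Sigma$ and $E_2, \ldots, E_n$ tangent to $\Sigma$; throughout I write $u = \abs{\nabla f}$ and let indices $a, b, c$ run over $2, \ldots, n$. The first task is to translate every curvature quantity appearing in the definition (\ref{eqnbetatensor}) of $D$ into data on $\Sigma$. Since $\df f(E_a) = 0$, the $(\lambda, n+m)$-Einstein equation (\ref{eqnWPEf}) together with the standard identity $\Hess f(E_a, E_b) = u\, h_{ab}$ gives $\Ric(E_a, E_b) = \lambda \delta_{ab} - u\, h_{ab}$ on tangent directions. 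Equation (\ref{eqnWPEPf}) combined with $\scal = (n-1)\lambda - (m-1)\rho$ yields the mixed component $\Ric(E_a, \nabla f) = \frac{m}{2(m-1)} E_a \scal$, which is where the tangential gradient of the scalar curvature enters. Finally, tracing $\Ric(E_a, E_b) = \lambda\delta_{ab} - u\, h_{ab}$ over $a$ gives $\scal - \Ric(E_1, E_1) = (n-1)(\lambda - u\sigma)$, i.e. $\frac{\Ric(E_1,E_1) - \scal}{n-1} + \lambda = u\sigma$, a relation I will need to make the shape-operator term come out trace-free.

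With these substitutions I will compute every component $D(E_i, E_j, E_k)$. Because $g(E_i, \nabla f) = u\,\delta_{i1}$ vanishes on all tangent directions, only a few families survive. I expect $D(E_1, E_b, E_c) = -\frac{u^2}{n-2}(h_{bc} - \sigma \delta_{bc})$, after using the trace relation above to absorb the $\scal$ and $\Ric(E_1, E_1)$ contributions into the single term $u\sigma\delta_{bc}$; this is the trace-free second fundamental form. The purely tangential components become $D(E_a, E_b, E_c) = \frac{m}{2(m-1)(n-1)(n-2)}\left((E_a\scal)\delta_{bc} - (E_b\scal)\delta_{ac}\right)$, while $D(E_1, E_b, E_1) = \frac{m}{2(m-1)(n-1)} E_b\scal$; one checks that $D(E_a, E_b, E_1) = 0$ and that any component with two $E_1$'s vanishes by skew-symmetry in the first two slots.

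It then remains to assemble $\abs{D}^2 = \sum_{i,j,k} D(E_i, E_j, E_k)^2$, using skew-symmetry to organize the sum. The components $D(E_1, E_b, E_c)$ and their mirrors $D(E_a, E_1, E_c)$ contribute $\frac{2u^4}{(n-2)^2}\sum_{a,b}(h_{ab}-\sigma\delta_{ab})^2$, which is precisely the first term of (\ref{eqnBnormlevelset}). The scalar-curvature gradient, however, appears in two separate places: the tangential components $D(E_a,E_b,E_c)$ contribute a multiple of $\frac{1}{n-2}\abs{\nabla^\Sigma\scal}^2$ after the index contraction $\sum_{a,b,c}\left((E_a\scal)\delta_{bc}-(E_b\scal)\delta_{ac}\right)^2 = 2(n-2)\abs{\nabla^\Sigma\scal}^2$, while the components $D(E_1, E_b, E_1)$ and their mirrors contribute a multiple of $\abs{\nabla^\Sigma\scal}^2$. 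The main bookkeeping point, and the step I expect to be most delicate, is combining these two sources: their coefficients add as $\frac{1}{n-2} + 1 = \frac{n-1}{n-2}$, and only after this combination does the scalar-gradient coefficient collapse to the claimed $\frac{m^2}{2(n-1)(n-2)(m-1)^2}$. Verifying the trace identity that forces $D(E_1, E_b, E_c)$ to be exactly trace-free, and keeping the two scalar-gradient contributions straight, are the two places where sign and index errors are easiest to make.
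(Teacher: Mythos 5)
Your proposal is correct; I checked the component formulas and the final assembly and they all come out right (in particular $D(E_1,E_b,E_c)=-\tfrac{u^2}{n-2}(h_{bc}-\sigma\delta_{bc})$, $D(E_1,E_b,E_1)=\tfrac{m}{2(m-1)(n-1)}E_b\scal$, the contraction $\sum_{a,b,c}((E_a\scal)\delta_{bc}-(E_b\scal)\delta_{ac})^2=2(n-2)\abs{\nabla^\Sigma\scal}^2$, and the recombination $\tfrac{1}{n-2}+1=\tfrac{n-1}{n-2}$ that produces the coefficient $\tfrac{m^2}{2(n-1)(n-2)(m-1)^2}$). Your route is, however, organized quite differently from the paper's. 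The paper never isolates individual frame components of $D$: it writes $D_{ijk}=b_1(\nabla_i\rho\,\delta_{jk}-\nabla_j\rho\,\delta_{ik})+b_2(\nabla_i f R_{jk}-\nabla_j f R_{ik})+b_3(\nabla_i f\delta_{jk}-\nabla_j f\delta_{ik})$ and expands $\abs{D}^2$ as a quadratic form in $b_1,b_2,b_3$, expressing everything through the invariants $\abs{\Ric}^2$, $\abs{\nabla\rho}^2$, $\nabla\rho\cdot\nabla f$, $\rho$, $\lambda$; separately it computes $\sum_{a,b}\abs{h_{ab}-\sigma g_{ab}}^2$ in the same invariants, and then matches the two expressions by a final (unwritten) algebraic comparison. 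Your approach instead substitutes the level-set data into each component of $D$ in the adapted frame and sums squares, so the identity emerges term by term with no matching step; this is more transparent, makes the cross-term bookkeeping trivial, and as a bonus exhibits exactly which components of $D$ encode the trace-free second fundamental form and which encode $\nabla^\Sigma\scal$ --- information that is then reused in Proposition \ref{propWPEvanishingD}. The price is that you must verify the trace identity $\scal-\Ric(E_1,E_1)=(n-1)(\lambda-u\sigma)$ to see that $D(E_1,E_b,E_c)$ is built from the trace-free part of $h$, but you flag and handle this correctly. One small wording slip: your remark that ``any component with two $E_1$'s vanishes by skew-symmetry'' should be restricted to $E_1$ occupying both of the first two slots, since $D(E_1,E_b,E_1)\ne 0$ --- as your own computation shows.
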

\begin{proof}
Let $\set{e_i}_{i=1}^n$ be an orthonormal frame with $e_1 = \frac{\nabla f}{|\nabla f|}$ at the point $\nabla f \ne 0$. The second fundamental form $h_{ab}$ and the mean curvature $H$ of the level hypersurface $\Sigma$ are given by
\begin{eqnarray*}
h_{ab} & = & g\left(\nabla_{e_a}\frac{\nabla f}{\abs{\nabla f}}, e_b\right) = \frac{1}{\abs{\nabla f}} \nabla_{e_a}\nabla_{e_b} f \\
& = & \frac{1}{\abs{\nabla f}}\left(\lambda g_{ab} - \Ric(e_a, e_b)\right) \\
H &= & \frac{1}{\abs{\nabla f}}\left((n-1)\lambda - \scal + \Ric(e_1, e_1)\right).
\end{eqnarray*}
So we have
\begin{eqnarray*}
\sum_{a,b=2}^n |h_{ab}|^2 & = & \frac{1}{|\nabla f|^2}\sum_{a,b=2}^n |\lambda g_{ab} - \Ric(e_a, e_b)|^2 \\
& = & \frac{1}{|\nabla f|^2}\left((n-1)\lambda^2 - 2 \lambda (\scal - \Ric(e_1, e_1)) + \sum_{a,b =2}^n |\Ric(e_a, e_b)|^2\right), \\
H^2 & = & \frac{1}{|\nabla f|^2}\left((n-1)^2 \lambda^2 - 2(n-1)\lambda (\scal - \Ric(e_1, e_1) + (\scal - \Ric(e_1, e_1))^2)\right).
\end{eqnarray*}
From $\Ric(\nabla f) = \rho \nabla f - \frac{m}{2}\nabla \rho$ it follows that
\begin{eqnarray*}
R_{11} = \Ric(e_1, e_1) & = & \rho - \frac{m}{2|\nabla f|^2} \nabla \rho \cdot \nabla f, \\
R_{1a} = \Ric(e_1, e_a) & = & - \frac{m}{2|\nabla f|}\nabla_a \rho.
\end{eqnarray*}
So we have
\begin{equation*}
\sum_{a,b=2}^n \left|h_{ab} - \sigma g_{ab}\right|^2 = \frac{1}{|\nabla f|^2}\abs{\Ric}^2 - \frac{2}{|\nabla f|^2}\sum_{a=2}^n R_{1a}^2 - \frac{1}{|\nabla f|^2}R_{11}^2 - \frac{1}{(n-1)|\nabla f|^2}(\scal - R_{11})^2,
\end{equation*}
where
\begin{eqnarray*}
- \frac{2}{|\nabla f|^2}\sum_{a=2}^n R_{1a}^2 & = & -\frac{m^2}{2|\nabla f|^4}|\nabla \rho|^2 + \frac{m^2}{2|\nabla f|^6}(\nabla \rho \cdot \nabla f)^2, \\
- \frac{1}{|\nabla f|^2}R^2_{11} & = & - \frac{1}{|\nabla f|^2}\rho^2 + \frac{m \rho}{|\nabla f|^4}\nabla \rho \cdot \nabla f - \frac{m^2}{4|\nabla f|^6}(\nabla \rho\cdot \nabla f)^2, \\
\scal - R_{11} & = & (n-1)\lambda - m \rho + \frac{m}{2|\nabla f|^2}\nabla \rho \cdot \nabla f, \\
- \frac{1}{(n-1)|\nabla f|^2}(\scal - R_{11})^2 & = & - \frac{(n-1)\lambda^2}{|\nabla f|^2} - \frac{m^2 \rho^2}{(n-1)|\nabla f|^2} + \frac{2m \lambda \rho}{|\nabla f|^2}, \\
& & - \frac{m^2}{4(n-1)|\nabla f|^6}(\nabla \rho \cdot \nabla f)^2 + \frac{m(m\rho - (n-1)\lambda)}{(n-1)|\nabla f|^4} \nabla \rho \cdot \nabla f.
\end{eqnarray*}
Adding them together yields
\begin{eqnarray*}
\sum_{a,b=2}^n \left|h_{ab} - \sigma g_{ab}\right|^2 & = & \frac{1}{|\nabla f|^2}\abs{\Ric}^2 - \frac{m^2}{2|\nabla f|^4}\abs{\nabla \rho}^2 \\
& & + \frac{m^2(n-2)}{4(n-1)|\nabla f|^6}(\nabla \rho \cdot \nabla f)^2 + \frac{m((m+n-1)\rho - (n-1)\lambda)}{(n-1)|\nabla f|^4}\nabla \rho \cdot \nabla f \\
& & - \frac{m^2+n-1}{(n-1)|\nabla f|^2}\rho^2 + \frac{2m}{|\nabla f|^2}\lambda \rho - \frac{n-1}{|\nabla f|^2}\lambda^2.
\end{eqnarray*}

Let $D_{ijk} = D(e_i, e_j,e_k)$, then we have
\begin{equation*}
D_{ijk} = b_1\left(\nabla_i \rho \delta_{jk} - \nabla_j \rho \delta_{ik}\right) + b_2\left(\nabla_i f R_{jk} - \nabla_j f R_{ik}\right)+ b_3\left(\nabla_i f \delta_{jk} - \nabla_j f\delta_{ik}\right)
\end{equation*}
where $\nabla_i = \nabla_{e_i}$ and
\begin{equation*}
b_1 = - \frac{m}{2(n-1)(n-2)}, \quad b_2 = \frac{1}{n-2}, \quad b_3 =  - \frac{(n-1)\lambda - m \rho}{(n-1)(n-2)}.
\end{equation*}
So we have
\begin{eqnarray*}
|D|^2 & = & \sum_{i,j,k = 1}^n D_{ijk}^2 \\
& = & b_1^2 (2(n-1)|\nabla \rho|^2) + b_2^2 \left(2|\nabla f|^2 \abs{\Ric}^2 - 2\Ric^2(\nabla f, \nabla f)\right) + b_3^2 \left( 2(n-1)|\nabla f|^2 \right) \\
& & + 2b_1b_2\left(2\scal \nabla \rho \cdot \nabla f - 2 \Ric(\nabla f, \nabla \rho)\right) + 2b_1 b_3\left(2(n-1)\nabla \rho \cdot \nabla f\right) \\
& & + 2b_2 b_3\left(2|\nabla f|^2 \scal - 2\Ric(\nabla f, \nabla f)\right) \\
& = & \frac{1}{2(n-1)(n-2)^2}\left(4(n-1)|\nabla f|^2\abs{\Ric}^2 - m^2 n |\nabla \rho|^2\right) \\
& & + \frac{4m\left((m+n-1)\rho - (n-1)\lambda\right)}{2(n-1)(n-2)^2} (\nabla \rho \cdot \nabla f) - \frac{4\left(((n-1)\lambda - m \rho)^2+(n-1)\rho^2\right)}{2(n-1)(n-2)^2}|\nabla f|^2.
\end{eqnarray*}
A straightforward computation shows that
\begin{equation*}
\abs{D}^2 = \frac{2|\nabla f|^4}{(n-2)^2}\sum_{a,b=2}^n \abs{h_{ab} - \sigma g_{ab}}^2 + \frac{m^2}{2(n-1)(n-2)}\abs{\nabla^{\Sigma}\rho}^2.
\end{equation*}
Substituting the function $\rho$ by $\scal$ gives us the desired identity in this lemma.
\end{proof}

Similarly the vanishing of $D$ tensor implies many nice properties about the geometry of the warped product Einstein manifold $(M^n, g,f)$ and the level sets of $f$.
\begin{prop}\label{propWPEvanishingD}
Suppose ($M^n,g, f$)($n \geq 3$) is a $(\lambda,n+m)$-Einstein manifold with $m\ne 1$ and $D=0$. Let $c$ be a regular value of $f$ and $\Sigma = \set{ x\in M | f(x) = c}$ be the level hypersurface of $f$. Then we have
\begin{enumerate}
\item both the scalar curvature and $|\nabla f|^2$ are constant on $\Sigma$;
\item on $\Sigma$, the Ricci tensor either has either a unique eigenvalue or, two distinct eigenvalues with multiplicity $1$ and $n-1$, moreover the eigenvalue with multiplicity $1$ is in the direction of $\nabla f$;
\item the second fundamental form $h_{ab}$ of $\Sigma$ is of the form $h_{ab}=\frac{H}{n-1}g_{ab}$;
\item the mean curvature $H$ is constant on $\Sigma$;
\item $R(\nabla f, X,Y,Z)=0$ for any vectors $X, Y, Z$ tangent to $\Sigma$.
\end{enumerate}
\end{prop}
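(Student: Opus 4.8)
\section*{Proof proposal}

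The plan is to read off all five statements from the level-set identity in Lemma~\ref{lemBnormlevelset}. Since $D\equiv 0$, the right hand side of (\ref{eqnBnormlevelset}) must vanish, and because it is a sum of two manifestly nonnegative terms, each term vanishes separately. The second term forces $\abs{\nabla^{\Sigma}\scal}^2=0$, i.e. the scalar curvature is constant along $\Sigma$, giving the first half of (1); using $\scal=(n-1)\lambda-(m-1)\rho$ together with $m\ne 1$, this is equivalent to $\rho$ being constant on $\Sigma$, so every tangential derivative $\nabla_a\rho$ ($a\ge 2$) vanishes. The first term forces $\sum_{a,b}\abs{h_{ab}-\sigma g_{ab}}^2=0$, i.e. $h_{ab}=\sigma g_{ab}$ with $\sigma=H/(n-1)$, which is exactly (3). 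Everything else will be deduced from these two facts together with the structural identity (\ref{eqnWPEPf}) and the Einstein equation (\ref{eqnWPEf}).

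Next I would finish (1) and prove (2). By (\ref{eqnWPEPf}), $\Ric(\nabla f)=\rho\nabla f-\tfrac{m}{2}\nabla\rho$; since $\nabla\rho$ has no tangential component on $\Sigma$, neither does $\Ric(\nabla f)$, so $\Ric(e_1,e_a)=R_{1a}=0$ for $a\ge 2$ and $\Ric(\nabla f)$ is parallel to $\nabla f$. Differentiating and using (\ref{eqnWPEf}) gives, as vectors, $\tfrac{1}{2}\nabla\abs{\nabla f}^2=\Hess f(\nabla f,\cdot)=\left(\lambda+\tfrac{1}{m}\abs{\nabla f}^2\right)\nabla f-\Ric(\nabla f)$, which is therefore also parallel to $\nabla f$; hence $\abs{\nabla f}^2$ has vanishing tangential derivative and is constant on $\Sigma$, completing (1). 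For (2), solving $h_{ab}=\abs{\nabla f}^{-1}(\lambda g_{ab}-\Ric(e_a,e_b))$ under the umbilic condition yields $\Ric(e_a,e_b)=(\lambda-\abs{\nabla f}\sigma)g_{ab}$ for $a,b\ge 2$; combined with $R_{1a}=0$, the Ricci tensor is diagonal in the frame $\set{e_1,\ldots,e_n}$, with $\nabla f$ an eigendirection and the tangential block a single eigenvalue $\lambda-\abs{\nabla f}\sigma$ of multiplicity $n-1$. Thus either $R_{11}$ equals this value (one eigenvalue) or it does not (two eigenvalues of multiplicities $1$ and $n-1$, the former in the $\nabla f$ direction), which is (2).

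For (4) I would first observe that the reasoning above applies verbatim to every regular level set near $\Sigma$: since $c$ is a regular value, the level sets foliate a neighborhood on which $\nabla f\ne 0$, and there both $\rho$ and $\abs{\nabla f}^2$ are constant on each leaf, hence smooth functions of $f$ alone. Consequently their normal derivatives $\nabla_{e_1}\rho$ and $\nabla_{e_1}\abs{\nabla f}^2$ are themselves constant on $\Sigma$. Writing $H=\abs{\nabla f}^{-1}\left((n-1)\lambda-\scal+R_{11}\right)$ and expanding $R_{11}=\rho-\tfrac{m}{2\abs{\nabla f}^2}\,\nabla\rho\cdot\nabla f$ with $\nabla\rho\cdot\nabla f=(\nabla_{e_1}\rho)\abs{\nabla f}$, every quantity appearing is now constant on $\Sigma$, so $H$ is constant, which is (4). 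Finally (5) follows from Codazzi: for tangent $X,Y,Z$ the normal component of the ambient curvature is, up to the sign convention, $R(X,Y,Z,e_1)=(\nabla_X h)(Y,Z)-(\nabla_Y h)(X,Z)$, and for $h=\sigma g$ with $\sigma$ constant on $\Sigma$ this equals $(X\sigma)g(Y,Z)-(Y\sigma)g(X,Z)=0$. The curvature symmetries reduce $R(e_1,X,Y,Z)$ to this Codazzi quantity $R(Y,Z,X,e_1)$, so $R(\nabla f,X,Y,Z)=\abs{\nabla f}\,R(e_1,X,Y,Z)=0$, which is (5).

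The step I expect to be the main obstacle is (4): unlike (1), (2) and (3), it is not immediate from $D=0$ at the single hypersurface $\Sigma$, but genuinely requires propagating the constancy statements to a neighborhood in order to control the normal derivative of $\rho$ (equivalently, to show $R_{11}$ is constant along $\Sigma$). One must also note that it is $\sigma=H/(n-1)$, and not merely $H$, that enters the Codazzi computation, so (4) is logically required before (5).
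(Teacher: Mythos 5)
Your proposal is correct and follows essentially the same route as the paper, which simply defers to the argument of Proposition 3.1 in \cite{caochenBach} applied via Lemma \ref{lemCDWeyl}'s companion identity (\ref{eqnBnormlevelset}): both terms on the right of that identity vanish, giving umbilicity and the tangential constancy of $\scal$, and the remaining claims are read off from (\ref{eqnWPEPf}), the Einstein equation, and the Codazzi equation exactly as you describe. Your observation that (4) requires propagating the constancy to nearby level sets (so that $\rho$ and $\abs{\nabla f}^2$ become functions of $f$ alone, making $\nabla\rho\cdot\nabla f$ constant on $\Sigma$) before (5) can be deduced is precisely the content of the cited argument.
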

\begin{proof}
It follows the argument in the proof of \cite[Proposition 3.1]{caochenBach} by using Lemma \ref{lemBnormlevelset}.
\end{proof}
\begin{rem}
If a $(\lambda, n+m)$-Einstein manifold with $m \ne 2-n$ has harmonic Weyl tensor and $W(\nabla f, \cdot, \cdot, \cdot)$, then the $D$ tensor vanishes by Lemma \ref{lemCDWeyl}. So Proposition \ref{propWPEvanishingD} offers an alternative proof of Theorem 7.9 in \cite{HPWlcf} which is the main step for the global classification in Theorem 7.10.
\end{rem}

\medskip{}
\section{The proof of Theorem \ref{thmWPEBachflatD4} and Theorem \ref{thmWPEBachflatallD}}

In this section we first prove Theorem \ref{thmWPEBachflatallD}, i.e., a compact Bach flat $(\lambda, n+m)$-Einstein manifold with $m \ne 0, 1$ or $2-n$ has harmonic Weyl tensor and $W(X,Y,Z, \nabla f) = 0$ for any $X, Y, Z \in TM$. Then Theorem \ref{thmWPEBachflatD4} follows by using Theorem 7.9 in \cite{HPWlcf}.

\begin{proof}[Proof of Theorem \ref{thmWPEBachflatallD}]
We follow the argument in \cite{caochenBach}. Fix a point $p \in M$ and assume that $\set{E_i}_{i=1}^n$ is an orthonormal frame with $\nabla E_i(p) = 0$. Using equation (\ref{eqnCottondivWeyl}), equation (\ref{eqnBachCWeyl}) of Bach tensor and Lemma \ref{lemCDWeyl}, a direct computation shows that for any $X, Y \in TM$ we have
\begin{eqnarray*}
(n-2)\B(X, Y) & = & \sum_i (\nabla_{E_i} C)(E_i, X, Y) + \sum_{i,j}\Ric(E_i, E_j)W(X, E_i, E_j, Y) \\
& = & (\nabla_{E_i} W)(E_i, X, Y, \nabla f) + W(E_i, X, Y, \nabla_{E_i}\nabla f) \\
& & + \frac{m+n-2}{m}(\nabla_{E_i} D)(E_i, X, Y) + \Ric(E_i, E_j)W(X, E_i, E_j, Y) \\
& = & (\dive W)(\nabla f, Y, X) + \frac{m+n-2}{m}(\nabla_{E_i} D)(E_i, X, Y) \\
& & + W(X, E_i, E_j, Y)\left(\Ric(E_i, E_j) + \Hess f(E_i, E_j)\right) \\
& = & \frac{n-3}{n-2}C(\nabla f, Y, X) + \frac{m+n-2}{m}(\nabla_{E_i} D)(E_i, X, Y) \\
& & + W(X, E_i, E_j, Y)\left(\frac{1}{m}g(\nabla f, E_i)g(\nabla f, E_j) + \lambda g(E_i, E_j)\right) \\
& = & \frac{n-3}{n-2}C(\nabla f, Y, X) + \frac{m+n-2}{m}(\nabla_{E_i} D)(E_i, X, Y) \\
& & + \frac{1}{m}W(\nabla f, X, Y, \nabla f).
\end{eqnarray*}
Letting $X = Y = \nabla f$ and integrating on $M$ yield
\begin{eqnarray}
\frac{m(n-2)}{m+n-2}\int_M \B(\nabla f, \nabla f) d\vol & = & \int_M \sum_i(\nabla_{E_i}D)(E_i, \nabla f, \nabla f) d\vol \notag \\
& = & - \int_M \sum_i D(E_i, \nabla f, \nabla_{E_i}\nabla f)d\vol.  \label{eqnIBPBachff}
\end{eqnarray}
For the integrand using the fact that $D$ tensor is trace free for any two indices, we have
\begin{eqnarray*}
- \sum_i D(E_i, \nabla f, \nabla_{E_i}\nabla f)& = & \sum_{i,j} D(E_i, \nabla f, E_j)\left(\Ric(E_i, E_j) - \frac{1}{m}g({E_i}, \nabla f) g({E_j}, \nabla f) - \lambda g(E_i, E_j)\right) \\
& = & \sum_{i,j,k} D(E_i, E_k, E_j)\Ric(E_i, E_j)g(E_k, \nabla f) \\
& = & \frac{1}{2}\sum_{i,j,k} D(E_i, E_k, E_j)\left(\Ric(E_i, E_j)g(E_k, \nabla f) - \Ric(E_k, E_j)g(E_i, \nabla f)\right) \\
& = & - \frac{1}{2}\sum_{i,j,k}\abs{D(E_i, E_j,E_k)}^2.
\end{eqnarray*}
It follows that
\begin{equation}\label{eqnintegralBachff}
\frac{m(n-2)}{m+n-2}\int_M \B(\nabla f, \nabla f) d\vol = - \frac{1}{2}\int_M \abs{D}^2 d\vol.
\end{equation}
So vanishing Bach tensor implies that $D$ tensor vanishes on $M$.

From equation (\ref{eqnCDWeyl}) we have $C(X,Y,Z) = W(X,Y,Z, \nabla f)$. We show that both are zero on the regular points of $f$ and then on $M$ since $f$ is an analytic function, see \cite[Proposition 2.8]{HPWlcf}. At a regular point of $f$ we choose $E_1 = \frac{\nabla f}{\abs{\nabla f}}$ and let $C_{ijk} = C(E_i, E_j, E_k)$. By the symmetry of Weyl tensor we have $C_{ij1} = 0$. Let $a,b, c \geq 2$ be integers. From Proposition \ref{propWPEvanishingD} we have $\Ric(E_1, E_a) = 0$, $R(E_1, E_a, E_b, E_c) = 0$ and thus $W(E_a, E_b, E_c, E_1) = R(E_a, E_b, E_c, E_1) = 0$. So we have $C_{abc} = W(E_a, E_b, E_c, \nabla f) = 0$. It remains to show $C_{1ij} = 0$ for any $i,j =1, \ldots, n$. Since $D = 0$, Bach flatness implies that
\begin{eqnarray*}
0 = (n-2)\B(E_i, E_j) & = & \frac{n-3}{n-2}C_{1ij}\abs{\nabla f} + \frac{1}{m}W(E_1, E_i,E_j,\nabla f)\abs{\nabla f} \\
& = & \frac{n-3}{n-2}C_{1ij}\abs{\nabla f} + \frac{1}{m}C_{1ij}\abs{\nabla f}.
\end{eqnarray*}
It follows that we have $C_{1ij} = 0$ if $m \ne - \frac{n-2}{n-3}$. When $n= 4$, $- \frac{n-2}{n-3} = -2$ which is excluded in the theorem. When $n \geq 5$, an extension of Proposition 5.1 in \cite{caochenBach} shows that $C_{1ij} =0$ for all $m \ne 0, 1$ or $2-n$.
\end{proof}

\begin{proof}[Proof of Theorem \ref{thmWPEBachflatD4}]
From Theorem \ref{thmWPEBachflatallD} we know that $(M^4, g,f)$ has harmonic Weyl tensor and $W(\nabla f, X, Y, Z) = 0$ for any $X, Y, Z \in TM$. We assume that $M$ is not Einstein. At a regular point $p$ of $f$ we assume that the Ricci tensor has distinct eigenvalues. The complement of such points can not contain an open set as $g$ and $f$ are analytic in the harmonic coordinate, see \cite[Proposition 2.8]{HPWlcf}. So it is enough to show that the metric $g$ is locally conformal flat around $p$. Theorem 7.9 in \cite{HPWlcf} says that the metric is locally a warped product over an interval, i.e., $g =\df t^2 + \psi(t)^2 g_L$ where $(L^3, g_L)$ is an Einstein metric and thus has constant curvature. A computation shows that such metric has vanishing Weyl tensors, i.e., it is locally conformally flat.

An alternative approach is to use the symmetries of Weyl tensors to show that they are zeros as in the proof of Theorem 1.1 in \cite{caochenBach}.
\end{proof}

\begin{rem}
In \cite{HPWlcf} the authors considered warped product Einstein manifold with non-empty boundary. Let $w = \exp(-\frac{f}{m})$ in the interior of $M$ and $w = 0$ on the boundary $\partial M$. Both Theorem \ref{thmWPEBachflatD4} and Theorem \ref{thmWPEBachflatallD} can also be extended to case when $M$ has non-empty boundary. For any small $\epsilon > 0$ we define $M_{\epsilon} = \set{x\in M : w(x) \geq \epsilon}$ and we only have to show that $D = 0$ on $M_{\epsilon}$. Then taking the limit $\epsilon \rightarrow 0$ implies that $D = 0$ on $M$. In fact the boundary term of the integral (\ref{eqnIBPBachff}) vanishes:
\begin{equation*}
\int_{\partial M_{\epsilon}} D(\nu, \nabla f, \nabla f) d\vol = 0
\end{equation*}
since the unit normal vector $\nu$ of $\partial M_{\epsilon}$ is parallel to $\nabla f$. So the integral equation (\ref{eqnintegralBachff}) holds on $M_{\epsilon}$ and then $D = 0$ on $M_{\epsilon}$.
\end{rem}

\medskip{}

\vfill

\end{document}